\title{\vspace{-0.6cm} On a $d$-degree Erd\H os--Ko--Rado Theorem}
\newcommand{\genlegendre}[4]{%
	\genfrac{(}{)}{}{#1}{#3}{#4}%
	\if\relax\detokenize{#2}\relax\else_{\!#2}\fi
}
\newtheorem{theorem}{Theorem}[section]
\newtheorem{lemma}[theorem]{Lemma}
\newtheorem{conjecture}[theorem]{Conjecture}
\author{
  Hao Huang \thanks{Department of Mathematics, National University of Singapore. Email: huanghao@nus.edu.sg. Research supported in part by a start-up grant at NUS and an MOE Academic Research Fund (AcRF) Tier 1 grant.}
	\and Yi Zhang \thanks{Department of Mathematics, Beijing University of Posts and Telecommunications. Email: shouwangmm@sina.com. Research supported by Fundamental Research Funds for the Central Universities and Innovation Foundation of BUPT
for Youth under Grant No. 2023RC49 and National Natural Science Foundation of China under Grant No. 11901048 and
12071002.}
}
\date{}
\begin{document}
\maketitle
\abstract{A family of subsets $\mathcal{F}$ is intersecting if $A \cap B \neq \emptyset$ for any $A, B \in \mathcal{F}$. In this paper, we show that for given integers $k > d \ge 2$ and $n \ge 2k+2d-3$, and any intersecting family $\mathcal{F}$ of $k$-subsets of $\{1, \cdots, n\}$, there exists a $d$-subset of $[n]$ contained in at most $\binom{n-d-1}{k-d-1}$ subsets of $\mathcal{F}$. This result, proved using spectral graph theory, gives a $d$-degree generalization of the celebrated Erd\H os--Ko--Rado Theorem, improving a theorem of Kupavskii.}

\section{Introduction}

Denote by $[n]$ the set $\{1, 2, \cdots, n\}$, and $\binom{[n]}{k}$ the family of all the $k$-subsets of $[n]$. The celebrated Erd\H os--Ko--Rado (EKR) Theorem \cite{ekr} states that for $n \ge 2k$, an intersecting family $\mathcal{F} \subset \binom{[n]}{k}$ has at most $\binom{n-1}{k-1}$ subsets. The upper bound is attained by an {\it $1$-star}, i.e. the family of all the size-$k$ subsets of $[n]$ that contain a fixed element. Moreover, when $n \ge 2k+1$, this construction is unique up to isomorphism.  Generalizations and analogues of the EKR  Theorem have been studied over the years, see for example \cite{FrGr89} for different proofs and \cite{DeFr83} for a survey. 

One may view $\mathcal{F} \subset \binom{[n]}{k}$ as a $k$-uniform $n$-vertex hypergraph and consider its hypergraph-theoretic properties. Given a subset $S \subset [n]$, the {\it $S$-degree} of $\mathcal{F}$ is defined as $d_S(\mathcal{F})=|\{T: S \subset T \in \mathcal{F}\}|$. In other words, $d_S(\mathcal{F})$ counts the number of subsets containing $S$ in $\mathcal{F}$. For a fixed integer $0 \le d \le k$, the {\it minimum $d$-degree} of $\mathcal{F}$, denoted by $\delta_d(\mathcal{F})$, is the minimum of $d_S(\mathcal{F})$, over all $|S|=d$. For instance, $\delta_0(\mathcal{F})$ is simply the number of edges of $\mathcal{F}$, and $\delta_1(\mathcal{F})$ is the usual minimum vertex degree of $\mathcal{F}$. In this language, The EKR Theorem determines the maximum of $\delta_0(\mathcal{F})$ among all intersecting $k$-uniform $n$-vertex hypergraphs.

Motivated by the study of Dirac-type problems in hypergraphs (see \cite{RRsurvey, zhao_survey} for two comprehensive surveys), Huang and Zhao \cite{degree_ekr} established the following degree analogue of the EKR Theorem: for $n \ge 2k+1$ and every intersecting family $\mathcal{F} \subset \binom{[n]}{k}$, $\delta_1(\mathcal{F}) \le \binom{n-2}{k-2}$. Both the range $n \ge 2k+1$ and the bound $\binom{n-2}{k-2}$ given by $1$-stars are best possible. Actually, for most values of $k$, there are regular intersecting subfamilies of $\binom{[2k]}{k}$ of size $\binom{2k-1}{k-1}$, as shown by Ihringer and Kupavskii \cite{ihr-kup}. Subsequently, Frankl and Tokushige \cite{frankl-tokushige} gave a combinatorial proof of the Huang--Zhao Theorem for $n \ge 3k$, based on an earlier result by Frankl \cite{frankl_max} which provides sharp upper bounds on the size
of intersecting families with certain maximum degree. By analyzing their technique more carefully, one can actually get the proof work for $n\ge 2k+3$, but it seems very hard to obtain the exact range $n \ge 2k+1$ via combinatorial means. Later on, Frankl, Han, Huang and Zhao \cite{degree_hm} also proved a degree version of the Hilton--Milner theorem.

It is not hard to observe that by induction on $n$, the Huang--Zhao Theorem implies the classical EKR Theorem. A natural question arises: for $d \ge 2$, does the $1$-star still  maximize the minimum $d$-degree $\delta_d(\mathcal{F})$, among all intersecting families $\mathcal{F}$? Kupavskii \cite{kup} confirmed this speculation for sufficiently large $n$. He proved that if $n \ge 2k+ \frac{3d}{1-d/k}$ and  $1 \le d < k$, then $\delta_d(\mathcal{F}) \le \binom{n-d-1}{k-d-1}$,  Note that when $d$ and $k$ are very close, the lower bound on $n$ can be quadratic in $d$ or $k$. In this paper, we use techniques from spectral graph theory to significantly improve this range. Our main result is the following.

\begin{theorem}\label{thm_main}
For positive integers $n, k, d$ satisfying $k > d \ge 2$ and $n \ge 2k+2d-3$ , every intersecting family $\mathcal{F}$ of $k$-subsets of $[n]$ must have 
$$\delta_d(\mathcal{F})\le \binom{n-d-1}{k-d-1}.$$
\end{theorem}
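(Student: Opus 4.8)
The plan is to adapt the spectral approach sketched in the commented-out $d=2$ section to general $d$. Let $\vec{h} \in \{0,1\}^{\binom{n}{k}}$ be the characteristic vector of $\mathcal{F}$ and let $A$ be the adjacency matrix of the Kneser graph $KG(n,k)$, whose eigenvalues are $(-1)^i \binom{n-k-i}{k-i}$ on eigenspaces $E_i$ for $0 \le i \le k$. Since $\mathcal{F}$ is intersecting, $\vec{h}^T A \vec{h} = 0$. Writing $\vec{h} = \sum_i \vec{h}_i$ with $\vec{h}_i \in E_i$ and using $\|\vec{h}\|^2 = |\mathcal{F}|$ together with the monotonicity of $\binom{n-k-i}{k-i}$ in $i$, I would bound the tail $\sum_{i \ge d+1}$ crudely by the eigenvalue at $i = d+1$, producing a first inequality that controls a weighted combination of $\|\vec{h}_0\|^2, \ldots, \|\vec{h}_d\|^2$ in terms of $|\mathcal{F}|$. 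This is the direct generalization of inequality \eqref{ineq1}.

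The second ingredient is a positive-semidefiniteness inequality coming from the assumption that \emph{every} $d$-degree is at least $\binom{n-d-1}{k-d-1}$. Let $\vec{D}_d = I_{d,k}\vec{h}$ record all the $d$-degrees. Assuming for contradiction that $d_S(\mathcal{F}) \ge \binom{n-d-1}{k-d-1}$ for all $|S|=d$, the sum $\sum_{|S \cap T| = 0}\bigl(d_S - \binom{n-d-1}{k-d-1}\bigr)\bigl(d_T - \binom{n-d-1}{k-d-1}\bigr)$ is nonnegative, where $S,T$ range over disjoint $d$-subsets. This expands to $\vec{D}_d^T M \vec{D}_d$ plus linear and constant terms in $|\mathcal{F}|$, with $M$ the disjointness matrix on $\binom{[n]}{d}$. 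The key computation is to diagonalize $I_{k,d} M I_{d,k}$ in the eigenbasis $E_0 \oplus \cdots \oplus E_d$: since $I_{k,i}\vec{h}_j = \vec{0}$ for $j > i$, only the components $\vec{h}_0, \ldots, \vec{h}_d$ survive, and I expect $\vec{D}_d^T M \vec{D}_d = \sum_{i=0}^{d} (-1)^i \lambda_i \|\vec{h}_i\|^2$ for explicit coefficients $\lambda_i$ built from products of binomial coefficients (the $d=2$ case gives the three-term formula in the excerpt). Substituting yields a second inequality, the analogue of \eqref{ineq2}, again only involving $\|\vec{h}_0\|^2, \ldots, \|\vec{h}_d\|^2$ and $|\mathcal{F}|$.

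The final step is to combine these two inequalities so as to eliminate all the intermediate terms $\|\vec{h}_1\|^2, \ldots, \|\vec{h}_{d-1}\|^2$ and, crucially, to ensure that the remaining terms $\|\vec{h}_i\|^2$ for $i \ge 1$ enter with \emph{nonpositive} coefficients so that they can be discarded. Using $\|\vec{h}_0\|^2 = |\mathcal{F}|^2/\binom{n}{k}$ then collapses everything to a quadratic inequality in $|\mathcal{F}|$ that factors, in analogy with \eqref{ineq3}, as a product of $\bigl(|\mathcal{F}| - \binom{n-1}{k-1}\bigr)$ and $\bigl(|\mathcal{F}| - \binom{n-d-1}{k-d-1}\tbinom{n}{d}/\tbinom{k}{d}\bigr)$. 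Since the EKR Theorem gives $|\mathcal{F}| \le \binom{n-1}{k-1}$ while the degree assumption forces $|\mathcal{F}| \ge \binom{n-d-1}{k-d-1}\binom{n}{d}/\binom{k}{d}$ by double counting, both factors are nonpositive, so the product can be nonnegative only at the endpoints, forcing $\mathcal{F}$ to be a $1$-star and contradicting the degree hypothesis (a $1$-star has a $d$-set of degree $0$).

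I expect the main obstacle to be twofold. First, the exact evaluation of the eigenvalues $\lambda_i$ of $I_{k,d} M I_{d,k}$ for general $d$ requires care; I would compute them either via the association scheme structure of the Johnson scheme or by directly counting, using that the $E_i$ are the Johnson-scheme eigenspaces shared by all inclusion and disjointness matrices. Second, and more delicate, is verifying that the linear combination eliminating the middle terms leaves all coefficients of the $\|\vec{h}_i\|^2$ with the correct sign precisely in the range $n \ge 2k+2d-3$; this sign analysis, rather than the algebra of the final quadratic, is where the stated bound on $n$ must come from, and checking it uniformly in $i$ for $1 \le i \le d$ will be the technical heart of the argument.
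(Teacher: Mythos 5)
Your plan coincides with the paper's actual proof in its two main ingredients and in the sign analysis: a Hoffman-type bound with the tail truncated at the $(d+1)$-st eigenvalue; the disjointness quadratic form $\vec{D}_d^T M \vec{D}_d$ diagonalized over the Johnson eigenspaces $E_0,\dots,E_d$; a linear combination of the two inequalities (note that with one combination you can cancel only one term --- the paper kills exactly the $\|\vec{h}_1\|^2$ coefficient and then \emph{proves} the coefficients of $\|\vec{h}_i\|^2$ for $2\le i\le d$ are nonpositive, automatically for even $i$ when $n\ge 2k+1$ and via a technical lemma for odd $i\ge 3$, which is precisely where $n\ge 2k+2d-3$ is used); and the same factored quadratic in $|\mathcal{F}|$. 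So up to the last step you are on the paper's track, including your correct prediction of where the threshold on $n$ comes from.

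The genuine gap is in your endgame, which as written does not yield a contradiction. First, your hypothesis ``for contradiction,'' namely $d_S \ge \binom{n-d-1}{k-d-1}$ for all $|S|=d$, is not the negation of the theorem: the $1$-star itself satisfies it, so no contradiction can follow from it. Second, the two factors are not ``both nonpositive'': EKR gives $|\mathcal{F}|-\binom{n-1}{k-1}\le 0$ while double counting gives $|\mathcal{F}|-\binom{n-d-1}{k-d-1}\binom{n}{d}/\binom{k}{d}\ge 0$, so they have \emph{opposite} signs (if both were nonpositive, the product would be nonnegative automatically and your inequality would carry no information). Third, and fatally, your claimed contradiction at the endpoint rests on a false statement: a $1$-star does not have a $d$-set of degree $0$. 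For a $d$-set $S$ avoiding the center $x$, its degree in the $1$-star is $\binom{n-d-1}{k-d-1}$ (choose the remaining $k-d-1$ elements outside $S\cup\{x\}$) --- exactly the bound in the theorem, which is why the bound is tight --- so concluding ``$\mathcal{F}$ is a $1$-star'' contradicts nothing. The repair is what the paper does: assume the strict inequality $\delta_d(\mathcal{F})>\binom{n-d-1}{k-d-1}$; then the degree inequality, and hence the factored inequality $0<\bigl(|\mathcal{F}|-\binom{n-1}{k-1}\bigr)\bigl(|\mathcal{F}|-\binom{n-d-1}{k-d-1}\binom{n}{d}/\binom{k}{d}\bigr)$, is strict, and strict double counting makes the second factor strictly positive, forcing $|\mathcal{F}|>\binom{n-1}{k-1}$ and contradicting EKR outright --- with no appeal to uniqueness of EKR extremal families and no equality-case analysis.
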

Note that when $d=2$, the range $n \ge 2k+1$ in Theorem \ref{thm_main} is the best we could hope for, since the conclusion might fail for $n=2k$. For example, for $(n, k, d)=(6, 3, 2)$, the unique $2$-$(6,3,2)$-design $\{6, 1, 2\}$, $\{6, 1, 3\}$, $\{6, 2, 4\}$, $\{6, 3, 5\}$, $\{6, 4, 5\}$, $\{1, 2, 5\}$, $\{1, 3, 4\}$, $\{1, 4, 5\}$, $\{2, 3, 4\}$, $\{2,3, 5\}$ gives an intersecting family with $\delta_2(\mathcal{F})=2>\binom{6-3}{3-3}$.

The rest of this paper is organized as follows. In the next section, we introduce some notations used throughout the paper, and sketch an outline of the proof. Our main result, Theorem \ref{thm_main}, will be proved in Section \ref{sec_main}, after two key lemmas are established. To ensure a smoother reading experience, we leave the detailed proofs of several unsurprising identities and inequalities to Appendix \ref{app}. Some remarks and open problems are discussed in Section \ref{sec_concluding}.

\section{Notations and preparations} \label{sec_pre}
For $1 \le i \le k-1$, we say that a family of $k$-subsets is an {\it $i$-star}, if it consists exactly of every $k$-set containing a fixed $i$-set. The {\it indicating vector} of a family $\mathcal{F} \subset \binom{[n]}{k}$, denoted by $\vec{1}_{\mathcal{F}}$, is a $\binom{n}{k}$-dimensional binary vector, whose coordinates correspond to subsets $S \in \binom{[n]}{k}$, and whose $S$-th coordinate is $1$ if $S \in \mathcal{F}$ and $0$ otherwise.

We denote by $KG(n, k)$ the Kneser graph with parameter $n \ge 2k$. Its vertex set consists of all the $k$-subsets of $[n]=\{1, \cdots, n\}$. Two vertices are adjacent if and only if their corresponding $k$-sets are disjoint. Let $A$ be its adjacency matrix. It is well-known that $A$ has the following eigenspace decomposition (see for example \cite{lovasz} or \cite{wilson}):
$$\mathbb{R}^{\binom{n}{k}} = E_0 \oplus E_1 \cdots \oplus E_k,$$ such that
\begin{itemize}
\item $\dim(E_i)=\binom{n}{i}-\binom{n}{i-1}$.
\item For any vector $\vec{v} \in E_i$, $A\vec{v}=(-1)^i\binom{n-k-i}{k-i} \vec{v}$.
\item The subspace $F_i=E_0 \oplus \cdots \oplus E_i$ is an $\binom{n}{i}$-dimensional subspace, spanned by the indicating vectors of the $\binom{n}{i}$ distinct $i$-stars. In other words, $E_i$ is the orthogonal complement of $F_{i-1}$ in $F_i$.
\end{itemize}
Let $\vec{h}=\vec{1}_{\mathcal{F}} \in \mathbb{R}^{\binom{n}{k}}$ be the indicating vector of the intersecting family $\mathcal{F}$. For $i=0, \cdots, k$, we consider the projection of $\vec{h}$ onto the subspace $E_i$ and denote it by $\vec{h}_i$. We have 
$$\vec{h}=\vec{h}_0 + \cdots + \vec{h}_k,$$
$$A\vec{h}_i=(-1)^i \binom{n-k-i}{k-i}\vec{h}_i.$$
Clearly for $i \neq j$, the vectors $\vec{h}_i$ and $\vec{h}_j$ are orthogonal.

In the next section, we will prove two inequalities involving the lengths of $\vec{h}_i$'s. The first one follows from a Hoffman bound type argument, under the assumption that $\mathcal{F}$ is intersecting. The second inequality is derived directly from information on the minimum $d$-degree $\delta_{d}(\mathcal{F})$. We will show that a carefully chosen linear combination of these two inequalities yields a contradiction to the Erd\H os--Ko--Rado Theorem which states that $|\mathcal{F}| \le \binom{n-1}{k-1}$ whenever $n \ge 2k$.
\section{The main theorem}\label{sec_main}
The first lemma compares two binomial coefficients, helping us later to bound the small eigenvalues of the Kneser graph $KG(n, k)$.
\begin{lemma}\label{lem_ineq}
For $n \ge 2k+1$ and $k>d\ge 1$, we have 
$$\frac{k-d}{n-k}\binom{n-k}{k} \ge \binom{n-k-d-1}{k-d-1}.$$
Furthermore, the equality only holds when $n=2k+1$.
\end{lemma}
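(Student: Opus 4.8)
The plan is to reduce the statement to a comparison between two products of $d$ consecutive integers. First I would apply the elementary identity $\frac{1}{n-k}\binom{n-k}{k}=\frac{1}{k}\binom{n-k-1}{k-1}$ to rewrite the left-hand side, so that the claimed inequality becomes $\frac{k-d}{k}\binom{n-k-1}{k-1}\ge \binom{n-k-d-1}{k-d-1}$. Writing $m=n-k$ (so that the hypothesis $n\ge 2k+1$ reads $m\ge k+1$) keeps the algebra transparent and isolates exactly where the assumption on $n$ enters.

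Next I would form the ratio of the two binomial coefficients. A direct cancellation of factorials gives $\binom{m-d-1}{k-d-1}\big/\binom{m-1}{k-1}=\prod_{j=1}^{d}\frac{k-j}{m-j}$, so the inequality is equivalent to $\prod_{j=1}^{d}\frac{k-j}{m-j}\le \frac{k-d}{k}$. Clearing denominators and absorbing the factor $k/(k-d)$ into the product telescopes the numerator into $k(k-1)\cdots(k-d+1)$, reducing everything to the clean statement $\prod_{j=0}^{d-1}(k-j)\le \prod_{j=1}^{d}(m-j)$, a comparison of two products of $d$ factors each.

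The core step is then a term-by-term comparison: I would pair the numerator factor $k-j$ with the denominator factor $m-1-j$ for $j=0,\ldots,d-1$. Since $m\ge k+1$, each pairing satisfies $m-1-j\ge k-j$, and because $k>d$ guarantees that all factors on both sides are strictly positive (the smallest numerator factor is $k-d+1\ge 2$ and the smallest denominator factor is $m-d\ge k-d+1$), the product inequality follows immediately from multiplying the pairwise inequalities together.

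The only real subtlety, and the place where I would be careful, is the equality analysis. The term-by-term argument shows that the product inequality is strict as soon as a single pairing is strict. In particular the $j=0$ pairing reads $m-1\ge k$, so equality forces $m=k+1$, that is $n=2k+1$; conversely, when $n=2k+1$ every pairing is an equality and the bound is tight. This pins down the equality condition exactly as claimed. I do not anticipate a genuine obstacle here beyond bookkeeping the index ranges and verifying positivity of all factors so that the term-by-term comparison is legitimate.
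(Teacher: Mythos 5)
Your proposal is correct and follows essentially the same route as the paper: after your absorption-identity cancellation, the comparison $\prod_{j=0}^{d-1}(k-j)\le\prod_{j=1}^{d}(m-j)$ with $m=n-k$ is exactly the paper's term-by-term bound on the ratio $\binom{n-k-d-1}{k-d-1}\big/\binom{n-k}{k}$ after factoring out $\frac{k-d}{n-k}$, with the same index-shifted pairing and the same equality analysis forcing $n=2k+1$. No gaps; the extra rewriting via $\frac{1}{n-k}\binom{n-k}{k}=\frac{1}{k}\binom{n-k-1}{k-1}$ is purely cosmetic.
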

\begin{proof}
We have
\begin{align*}
\frac{\binom{n-k-d-1}{k-d-1}}{\binom{n-k}{k}}&=\frac{(k-d)\cdots k}{(n-k-d)\cdots (n-k)}\\&=\frac{k-d}{n-k}\cdot \left(\frac{k-d+1}{n-k-d} \cdot \frac{k-d+2}{n-k-d+1}\cdots \frac{k}{n-k-1}\right). 
\end{align*}
Since $n \ge 2k+1$, each term in the product inside the parentheses is at most $1$. Thus the above expression is at most $\frac{k-d}{n-k}$. The equality only holds only when these terms are all $1$, which occurs when $n=2k+1$. 
\end{proof}
The next lemma considers the projections of the indicating vector $\vec{h}$ of $\mathcal{F}$ onto the eigenspaces of the Kneser graph $KG(n, k)$. 
\begin{lemma}\label{lem1}
For an intersecting family $\mathcal{F} \subset \binom{[n]}{k}$ with $n \ge 2k+1, k>d \ge 1$, we have
$$0 \ge -\frac{k-d}{n-k}\binom{n-k}{k}|\mathcal{F}|+\sum_{i=0}^d \left(\frac{k-d}{n-k}\binom{n-k}{k}+(-1)^i\binom{n-k-i}{k-i}\right)\|\vec{h}_i\|^2.$$
\end{lemma}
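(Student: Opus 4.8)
The plan is to use the Hoffman-type / ratio-bound argument on the Kneser graph. Since $\mathcal{F}$ is intersecting, no two of its sets are disjoint, so $\vec{h}$ is the indicating vector of an independent set in $KG(n,k)$. The key starting point is therefore the quadratic form identity
$$
\vec{h}^T A \vec{h} = 0,
$$
because $A_{S,T} = 1$ only when $S \cap T = \emptyset$, which never happens within $\mathcal{F}$. Using the eigenspace decomposition $\vec{h} = \vec{h}_0 + \cdots + \vec{h}_k$ together with the eigenvalue formula $A\vec{h}_i = (-1)^i \binom{n-k-i}{k-i} \vec{h}_i$ and the orthogonality of the $\vec{h}_i$, this expands to
$$
0 = \vec{h}^T A \vec{h} = \sum_{i=0}^{k} (-1)^i \binom{n-k-i}{k-i} \|\vec{h}_i\|^2.
$$
This is the raw spectral identity that I would manipulate to obtain the stated inequality.

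**Next I would** split the sum into the first $d+1$ terms (indices $0$ through $d$) and the tail (indices $d+1$ through $k$), and bound the tail from below. The idea is that for the tail I want to replace each of the alternating eigenvalues by a single uniform quantity. The natural choice, dictated by the form of the conclusion, is to bound $\left| (-1)^i \binom{n-k-i}{k-i} \right| \le \binom{n-k-d-1}{k-d-1}$ for every $i \ge d+1$; this holds because $\binom{n-k-i}{k-i}$ is decreasing in $i$ over this range, so the largest tail eigenvalue in absolute value is the one at $i = d+1$, namely $\binom{n-k-d-1}{k-d-1}$. Hence
$$
\sum_{i=d+1}^{k} (-1)^i \binom{n-k-i}{k-i} \|\vec{h}_i\|^2 \ge -\binom{n-k-d-1}{k-d-1}\sum_{i=d+1}^{k} \|\vec{h}_i\|^2.
$$
Now I would invoke Lemma~\ref{lem_ineq}, which tells us $\binom{n-k-d-1}{k-d-1} \le \frac{k-d}{n-k}\binom{n-k}{k}$, so the tail is bounded below by $-\frac{k-d}{n-k}\binom{n-k}{k}\sum_{i=d+1}^{k}\|\vec{h}_i\|^2$.

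**Then I would** rewrite $\sum_{i=d+1}^{k}\|\vec{h}_i\|^2 = \|\vec{h}\|^2 - \sum_{i=0}^{d}\|\vec{h}_i\|^2 = |\mathcal{F}| - \sum_{i=0}^{d}\|\vec{h}_i\|^2$, using that $\|\vec{h}\|^2 = |\mathcal{F}|$ since $\vec{h}$ is a $0$/$1$ vector. Substituting back into the identity $0 = \sum_{i=0}^d (-1)^i\binom{n-k-i}{k-i}\|\vec{h}_i\|^2 + (\text{tail})$ and collecting terms, the uniform constant $\frac{k-d}{n-k}\binom{n-k}{k}$ contributes a single $-\frac{k-d}{n-k}\binom{n-k}{k}|\mathcal{F}|$ term and augments each of the first $d+1$ coefficients, producing exactly
$$
0 \ge -\frac{k-d}{n-k}\binom{n-k}{k}|\mathcal{F}| + \sum_{i=0}^{d}\left(\frac{k-d}{n-k}\binom{n-k}{k} + (-1)^i\binom{n-k-i}{k-i}\right)\|\vec{h}_i\|^2,
$$
which is the claimed inequality.

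**The step I expect to require the most care** is verifying that $\binom{n-k-d-1}{k-d-1}$ really is the maximum of $\binom{n-k-i}{k-i}$ over all $i \in \{d+1,\dots,k\}$, i.e.\ the monotonicity of the tail eigenvalues, and ensuring the index bookkeeping in the tail is correct (for instance when $i > n-2k$ the binomial coefficient vanishes, but this only helps the lower bound). The inequality direction also hinges on all the $\|\vec{h}_i\|^2$ being nonnegative, which is automatic, but one must be careful that replacing a decreasing-in-absolute-value sequence by its maximum gives a valid \emph{lower} bound on a signed sum — this is why the bound is applied only after isolating the sign via the absolute value. The appeal to Lemma~\ref{lem_ineq} is then routine, and the final algebraic rearrangement is purely mechanical.
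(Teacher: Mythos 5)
Your proof is correct and is essentially identical to the paper's own argument: both start from $\vec{h}^T A \vec{h}=0$ for the independent set in $KG(n,k)$, expand over the eigenspaces, bound the tail eigenvalues (for $i \ge d+1$) in absolute value by $\binom{n-k-d-1}{k-d-1}$, upgrade this via Lemma~\ref{lem_ineq} to the uniform constant $\frac{k-d}{n-k}\binom{n-k}{k}$, and then use $\sum_{i=d+1}^k \|\vec{h}_i\|^2 = |\mathcal{F}| - \sum_{i=0}^d \|\vec{h}_i\|^2$ to collect terms. Your extra care about the monotonicity of $\binom{n-k-i}{k-i}$ in $i$ (valid since $n \ge 2k$) is exactly the justification the paper gives for its first inequality.
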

\begin{proof}
Since $\mathcal{F} \subset \binom{[n]}{k}$ is intersecting, it induces an independent set in the Kneser graph $KG(n, k)$. Therefore from the eigenspace decomposition in the previous section, we have
\begin{align*}
0&=\vec{h}^T A \vec{h}=\sum_{i=0}^k (-1)^i \binom{n-k-i}{k-i} \|\vec{h}_i\|^2\\
& =\sum_{i=0}^d (-1)^i \binom{n-k-i}{k-i} \|\vec{h}_i\|^2 + \sum_{i=d+1}^k (-1)^i \binom{n-k-i}{k-i} \|
\vec{h}_i\|^2\\
& \ge \sum_{i=0}^d (-1)^i \binom{n-k-i}{k-i} \|\vec{h}_i\|^2  - \binom{n-k-d-1}{k-d-1} \sum_{i=d+1}^k \|
\vec{h}_i\|^2\\
&\ge \sum_{i=0}^d (-1)^i \binom{n-k-i}{k-i} \|\vec{h}_i\|^2  - \frac{k-d}{n-k}\binom{n-k}{k} \sum_{i=d+1}^k \|\vec{h}_i\|^2\\
&=-\frac{k-d}{n-k}\binom{n-k}{k}|\mathcal{F}|+\sum_{i=0}^d \left(\frac{k-d}{n-k}\binom{n-k}{k}+(-1)^i\binom{n-k-i}{k-i}\right)\|\vec{h}_i\|^2.
\end{align*}
The first inequality uses $\binom{n-k-i}{k-i} \le \binom{n-k-d-1}{k-d-1}$ for all $i=d+1, \cdots, k$ and $n \ge 2k$. The second inequality is a direct consequence from the comparison in Lemma \ref{lem_ineq}. The last identity follows from $|\mathcal{F}|=\|\vec{h}\|^2=\sum_{i=0}^k \|\vec{h}_i\|^2.$
\end{proof}

The next lemma is based on the assumption of having large minimum $d$-degree. It does not require $\mathcal{F}$ to be intersecting.

\begin{lemma}\label{lem2}
Suppose $\mathcal{F}$ is a family of $k$-subsets of $[n]$, such that $\delta_d(\mathcal{F}) \ge \binom{n-d-1}{k-d-1}$, then the following inequality holds:
\begin{align*}
0 &\le \sum_{j=0}^d(-1)^j\binom{k-j}{d-j}\binom{n-d-j}{d-j}\binom{n-d-j}{k-d} \|\vec{h}_j\|^2 \\
&~~~ -2\binom{n-d-1}{k-d-1}\binom{n-d}{d}\binom{k}{d}|\mathcal{F}|+\binom{n-d-1}{k-d-1}^2\binom{n}{d}\binom{n-d}{d}.
\end{align*}
Furthermore,  the above inequality becomes strict if $\delta_{d}(\mathcal{F})>\binom{n-d-1}{k-d-1}$.
\end{lemma}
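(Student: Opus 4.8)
The plan is to carry out, for general $d$, the computation sketched for $d=2$ in the opening of the paper. Let $M$ be the $\binom{n}{d}\times\binom{n}{d}$ disjointness matrix, with $M_{S,T}=1$ if $S\cap T=\emptyset$ and $0$ otherwise, and let $I_{d,k}$ be the inclusion matrix whose $(S,A)$-entry is $1$ exactly when $S\subseteq A$ (so $I_{d,k}^{T}=I_{k,d}$). Write $c=\binom{n-d-1}{k-d-1}$ and let $\vec{D}_d=I_{d,k}\vec{h}\in\mathbb{R}^{\binom{n}{d}}$ record all the $d$-degrees $d_S(\mathcal{F})$. The hypothesis enters only through the elementary fact that, since every $d$-degree is at least $c$, each product $(d_S-c)(d_T-c)$ is nonnegative; summing over ordered disjoint pairs gives
$$\sum_{|S|=|T|=d,\ S\cap T=\emptyset}\left(d_S-c\right)\left(d_T-c\right)\ \ge\ 0 .$$
First I would expand the left-hand side. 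The quadratic term is $\vec{D}_d^{T}M\vec{D}_d$; the linear term equals $-2c\binom{n-d}{d}\binom{k}{d}|\mathcal{F}|$, using the double counts $\sum_S d_S=\binom{k}{d}|\mathcal{F}|$ and $\#\{T:T\cap S=\emptyset\}=\binom{n-d}{d}$; and the constant term is $c^{2}\binom{n}{d}\binom{n-d}{d}$, since $\binom{n}{d}\binom{n-d}{d}$ counts the ordered disjoint pairs. Thus the whole problem reduces to expressing $\vec{D}_d^{T}M\vec{D}_d$ through the lengths $\|\vec{h}_i\|^2$.

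For this I would use $\vec{D}_d^{T}M\vec{D}_d=\vec{h}^{T}\bigl(I_{k,d}MI_{d,k}\bigr)\vec{h}$. The matrix $B:=I_{k,d}MI_{d,k}$ is symmetric and commutes with the permutation action of $S_n$ on $\mathbb{R}^{\binom{n}{k}}$, so it lies in the Bose--Mesner algebra of the Johnson scheme and is diagonalized by the same eigenspaces $E_0,\dots,E_k$ as the Kneser matrix $A$; hence $B$ acts as a scalar $\lambda_i$ on each $E_i$. Because the columns of $I_{k,d}$ are precisely the indicating vectors of the $d$-stars, its column space is $F_d=E_0\oplus\cdots\oplus E_d$, so $I_{d,k}\vec{h}_j=\vec{0}$ for every $j>d$. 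Only $i=0,\dots,d$ therefore contribute, and $\vec{D}_d^{T}M\vec{D}_d=\sum_{i=0}^d\lambda_i\|\vec{h}_i\|^2$. Substituting the three terms back reproduces exactly the asserted inequality once one knows that
$$\lambda_i=(-1)^i\binom{k-i}{d-i}\binom{n-d-i}{d-i}\binom{n-d-i}{k-d},\qquad 0\le i\le d .$$

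The main obstacle is computing these scalars $\lambda_i$. The tidy way I would organize it is to factor $B$ through the $d$-set space: for $\vec{v}\in E_i$ with $i\le d$, the map $I_{d,k}$, being an intertwiner of the two Johnson schemes, sends $\vec{v}$ into the $i$-th eigenspace of $\mathbb{R}^{\binom{n}{d}}$, where $M$ is the adjacency matrix of $KG(n,d)$ and hence acts by its known eigenvalue $(-1)^i\binom{n-d-i}{d-i}$; applying $I_{k,d}$ then multiplies by the eigenvalue of $I_{k,d}I_{d,k}$ on $E_i$. Since $\bigl(I_{k,d}I_{d,k}\bigr)_{A,B}=\binom{|A\cap B|}{d}$, it remains to prove that this matrix has eigenvalue $\binom{k-i}{d-i}\binom{n-d-i}{k-d}$ on $E_i$, and the product of the two factors is exactly $\lambda_i$. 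I would establish the remaining eigenvalue by applying $I_{k,d}I_{d,k}$ to an explicit element of $E_i$, namely the image under $I_{k,i}$ of a top eigenvector of $\mathbb{R}^{\binom{n}{i}}$, which collapses to a product of binomial identities; the case $i=0$ is just the row sum $\sum_B\binom{|A\cap B|}{d}=\binom{k}{d}\binom{n-d}{k-d}$. These are the routine identities the paper defers to Appendix \ref{app}. Finally, the strictness assertion requires no further work: when $\delta_d(\mathcal{F})>c$ every factor $(d_S-c)(d_T-c)$ is strictly positive and the displayed sum is strict, so the conclusion is strict as well.
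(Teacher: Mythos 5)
Your proposal is correct, and its skeleton matches the paper's proof exactly: you sum $(d_S-c)(d_T-c)\ge 0$ over ordered disjoint pairs, evaluate the linear term by $\sum_S d_S=\binom{k}{d}|\mathcal{F}|$ and the constant term by counting the $\binom{n}{d}\binom{n-d}{d}$ ordered disjoint pairs, discard the projections $\vec{h}_j$ with $j>d$ because the row space of the inclusion matrix is $F_d$, and reduce everything to diagonalizing $\vec{D}_d^T M \vec{D}_d$ over $E_0,\dots,E_d$; your strictness argument (all factors strictly positive, and disjoint $d$-pairs exist since $n\ge 2k>2d$) is also the intended one. Where you genuinely diverge is the computation of the scalars $\lambda_i$. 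The paper expands the disjointness matrix by inclusion--exclusion, $\overline{W}_{d,d}=\sum_{i=0}^d(-1)^i W_{i,d}^T W_{i,d}$, rewrites the quadratic form as $\sum_i (-1)^i\binom{k-i}{d-i}^2 W_{i,k}^T W_{i,k}$, then needs the eigenvalue of $W_{i,k}^T W_{i,k}$ on $E_j$ for \emph{all} pairs $j\le i\le d$ (Lemma \ref{lem_eig_W}) and finally a generating-function identity (Lemma \ref{identity_1}) to collapse the resulting alternating sum into the closed product form. You instead factor the action of $W_{d,k}^T\overline{W}_{d,d}W_{d,k}$ through $\mathbb{R}^{\binom{n}{d}}$: the intertwining property (Schur's lemma for the multiplicity-free $S_n$-decomposition of the two Johnson schemes) sends the eigenspace $E_i$ of the $k$-set space into the $i$-th eigenspace of the $d$-set space, where $\overline{W}_{d,d}$ is the adjacency matrix of $KG(n,d)$ with known eigenvalue $(-1)^i\binom{n-d-i}{d-i}$, and the return map contributes the eigenvalue $\binom{k-i}{k-d}\binom{n-d-i}{k-d}=\binom{k-i}{d-i}\binom{n-d-i}{k-d}$ of $W_{d,k}^T W_{d,k}$ on $E_i$, which is precisely the single case $i=d$ of Lemma \ref{lem_eig_W}. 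The product of the two factors reproduces the paper's coefficient, so the two computations agree. Your route buys a conceptual explanation of why $\lambda_i$ factors as it does (Kneser eigenvalue on $d$-sets times the Gram eigenvalue of the inclusion map) and eliminates the identity of Lemma \ref{identity_1} altogether; its cost is the intertwining fact, which the paper's purely matrix-identity argument avoids and which you should either cite (it is standard, e.g.\ in Godsil--Meagher) or prove --- it follows from the same identity $W_{i,k}W_{j,k}^T=\sum_{\ell}\binom{n-i-j}{n-k-\ell}W_{\ell,i}^T W_{\ell,j}$ that the paper uses in Appendix \ref{app}, so no new machinery is really needed.
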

\begin{proof}
Since $d_S=d_S(\mathcal{F}) \ge \binom{n-d-1}{k-d-1}$ for all $|S|=d$, we have that for all $S, T \in \binom{[n]}{d}$,
$$\left(d_T-\binom{n-d-1}{k-d-1}\right)\left(d_S-\binom{n-d-1}{k-d-1}\right) \ge 0.$$
Summing over those (ordered) disjoint pairs, we have
\begin{align*}
0 &\le \sum_{S, T: |S|=|T|=d, S \cap T=\emptyset} \left(d_T-\binom{n-d-1}{k-d-1}\right)\left(d_S-\binom{n-d-1}{k-d-1}\right)\\
&=\vec{D}_d^T M \vec{D}_d - 2\binom{n-d-1}{k-d-1}\binom{n-d}{d} \sum_{S:|S|=d} d_S + \binom{n-d-1}{k-d-1}^2\binom{n}{d}\binom{n-d}{d}.
\end{align*}
Here $M$ is the $\binom{n}{d}$ by $\binom{n}{d}$ matrix with rows and columns indexed by $d$-subsets of $[n]$, and $M_{S, T}$ equals $1$ if $S \cap T=\emptyset$ and $0$ otherwise. $\vec{D}_d$ stands for the $\binom{n}{d}$-dimensional vector, whose coordinates record all the $d$-degrees of $\mathcal{F}$. By double counting, we have $\sum_{S: |S|=d} d_S=\binom{k}{d}|\mathcal{F}|$. So it remains to rewrite the quadratic part $\vec{D}_d^T M \vec{D}_d$ as a linear combination of $\|\vec{h}_i\|^2$'s. 

For $0 \le i \le j \le k$, we define the {\it $(i, j)$-inclusion matrix} $W_{i, j}$ to be the matrix with rows indexed by all $i$-subsets of $[n]$, columns indexed by all $j$-subsets of $[n]$, and $(W_{i,j})_{S,T}=1$ if $S \subset T$, and $0$ otherwise. Then we have $\vec{D}_d=W_{d,k}\vec{h}.$ Similarly, we also define the {\it $(i,j)$-disjointness matrix} $\overline{W}_{i, j}$ to have $(S, T)$-entry $1$ if $S \cap T=\emptyset$ and $0$ otherwise. With this notation, the matrix $M$ in the quadratic form is the same as $\overline{W}_{d,d}$.

Now we can rewrite the quadratic term $\vec{D}_d^T M \vec{D}_d$ as $\vec{h}^T(W_{d,k}^T\overline{W}_{d,d}W_{d,k}) \vec{h}$. It is not hard to see that $W_{d,k}^T\overline{W}_{d,d}W_{d,k}$ is an $\binom{n}{k} \times \binom{n}{k}$ matrix whose $(S, T)$-entry only depends on $|S \cap T|$. Thus it has the same eigenspace decomposition $E_0 \oplus \cdots \oplus E_k$ as that of the adjacency matrix of $KG(n, k)$, and
\begin{align*}
W_{d,k}^T\overline{W}_{d,d}W_{d,k}&=W_{d,k}^T\left(\sum_{i=0}^d (-1)^i W_{i,d}^T W_{i,d}\right)W_{d,k}=\sum_{i=0}^d (-1)^i (W_{i,d}W_{d,k})^TW_{i,d}W_{d,k}\\
&=\sum_{i=0}^d (-1)^i \binom{k-i}{d-i}^2 W_{i,k}^T W_{i,k}.
\end{align*}
On the other hand, $E_j$ is an eigenspace for the matrix $W_{i,k}^T W_{i,k}$ for its eigenvalue $\binom{k-j}{k-i}\binom{n-i-j}{k-i}$, which will be proved as Lemma \ref{lem_eig_W} in Appendix \ref{app}. Note that this eigenvalue vanishes whenever $j>i$. Therefore,
\begin{align*}
\vec{h}^T(W_{d,k}^T\overline{W}_{d,d}W_{d,k}) \vec{h}&=(\sum_{j=0}^k \vec{h}_j)^T  \left(\sum_{i=0}^d (-1)^i \binom{k-i}{d-i}^2 W_{i,k}^T W_{i,k}\right) (\sum_{j=0}^k \vec{h}_j)\\
&=\sum_{i=0}^d (-1)^i \binom{k-i}{d-i}^2 \left(\sum_{j=0}^i \binom{k-j}{k-i}\binom{n-i-j}{k-i}\|\vec{h}_j\|^2\right)\\
&=\sum_{j=0}^d \|\vec{h}_j\|^2 \cdot \left(\sum_{i=j}^d (-1)^i \binom{k-i}{d-i}^2 \binom{k-j}{k-i}\binom{n-i-j}{k-i}\right)\\
&=\sum_{j=0}^d(-1)^j\binom{k-j}{d-j}\binom{n-d-j}{d-j}\binom{n-d-j}{k-d} \|\vec{h}_j\|^2
\end{align*}
We leave the last equality as Lemma \ref{identity_1} whose proof is given in the appendix.\end{proof}

Now we are ready to prove our main Theorem, the $d$-degree version of the Erd\H os--Ko--Rado Theorem.

\begin{proof}[Proof of Theorem \ref{thm_main}:]
We prove by contradiction, suppose $\delta_d(\mathcal{F})>\binom{n-d-1}{k-d-1}$.

Apply Lemma \ref{lem1} and \ref{lem2} to $\mathcal{F}$ and rewrite the two inequalities obtained as:
$$0 \ge \sum_{i=0}^d a_i \|\vec{h}_i\|^2 - c |\mathcal{F}|,$$
and $$0 < \sum_{i=0}^d b_i \|\vec{h}_i\|^2 - f |\mathcal{F}| + g,$$
where $a_i=\frac{k-d}{n-k}\binom{n-k}{k}+(-1)^i\binom{n-k-i}{k-i}$, $c=\frac{k-d}{n-k}\binom{n-k}{k}$, $b_i=(-1)^i\binom{k-i}{d-i}\binom{n-d-i}{d-i}\binom{n-d-i}{k-d}$, $f=2\binom{n-d-1}{k-d-1}\binom{n-d}{d}\binom{k}{d}$, and $g=\binom{n-d-1}{k-d-1}^2\binom{n}{d}\binom{n-d}{d}$.
We subtract the second inequality by the first inequality multiplied by $b_1/a_1$. It is not hard to observe that both $b_1$ and $a_1=-\frac{d}{n-k}\binom{n-k}{k}$ are negative. Thus we have 
$$0 < \left(b_0-\frac{a_0b_1}{a_1}\right)\|\vec{h}_0\|^2 + \left(\sum_{i=2}^d (b_i-\frac{a_ib_1}{a_1})\|\vec{h}_i\|^2\right) -\left(f-\frac{cb_1}{a_1}\right)|\mathcal{F}| + g.$$
We claim the following: 
\begin{enumerate}
\item The coefficient of $\|\vec{h}_0\|^2$ is $b_0-a_0b_1/a_1=\frac{n(k-d)}{k(n-d)}\cdot b_0$, which is strictly positive.
\item For each even integer $2 \le i \le d$, we have $b_i \le a_ib_1/a_1$ as long as $n \ge 2k+1$. For each odd integer $3 \le i \le d$, we have $b_i \le a_ib_1/a_1$ as long as $n \ge 2k+2d-3$.
\item $f-b_1c/a_1=\frac{2kn-dk-dn}{k(n-d)}\cdot\binom{n-d-1}{k-d-1}\binom{n-d}{d}\binom{k}{d}.$
\end{enumerate}
Suppose all three claims are true. Note that $\|\vec{h}_i\|^2 \ge 0$, so using $\|\vec{h}_0\|^2=\langle \vec{h}_0, \frac{\vec{1}}{\sqrt{\binom{n}{k}}}\rangle^2 = |\mathcal{F}|^2/\binom{n}{k}$, we have 
\begin{align*}
0 &< \frac{n(k-d)}{k(n-d)}\cdot\binom{k}{d}\binom{n-d}{d}\binom{n-d}{k-d} \cdot \frac{|\mathcal{F}|^2}{\binom{n}{k}}\\
&~~~-\frac{2kn-dk-dn}{k(n-d)}\cdot\binom{n-d-1}{k-d-1}\binom{n-d}{d}\binom{k}{d}|\mathcal{F}| + \binom{n-d-1}{k-d-1}^2\binom{n}{d}\binom{n-d}{d}.
\end{align*}
After simplification, this quadratic inequality is equivalent to
\begin{align}\label{imp_ineq}
0 < \left(|\mathcal{F}|-\binom{n-1}{k-1}\right)\left(|\mathcal{F}|-\binom{n-d-1}{k-d-1}\frac{\binom{n}{d}}{\binom{k}{d}}\right),
\end{align}
Recall that we assume $\delta_d(\mathcal{F}) >\binom{n-d-1}{k-d-1}$. This implies 
$$|\mathcal{F}|=\frac{\sum_{S:|S|=d} d_S}{\binom{k}{d}}>\binom{n-d-1}{k-d-1}\frac{\binom{n}{d}}{\binom{k}{d}}.$$ Therefore, we must have $|\mathcal{F}|>\binom{n-1}{k-1}$, contradicting the Erd\H os--Ko--Rado Theorem. This completes the proof that $\delta_d(\mathcal{F}) \le \binom{n-d-1}{k-d-1}$ for intersecting family $\mathcal{F}$.


It remains to verify the three claims we made earlier. The first and third are done by straightforward calculations:
\begin{align*}
b_0-\frac{a_0b_1}{a_1}&=b_0\left(1-\frac{a_0}{a_1}\cdot \frac{b_1}{b_0}\right)=b_0\left(1-\frac{\frac{n-d}{n-k}\binom{n-k}{k}}{\frac{-d}{n-k}\binom{n-k}{k}}\cdot \frac{-\binom{k-1}{d-1}\binom{n-d-1}{d-1}\binom{n-d-1}{k-d}}{\binom{k}{d}\binom{n-d}{d}\binom{n-d}{k-d}}\right)\\
&=b_0\cdot \left(1- \frac{n-d}{d} \cdot \frac{d}{k}\cdot \frac{d}{n-d}\cdot \frac{n-k}{n-d}\right)=b_0 \cdot \frac{n(k-d)}{k(n-d)}.
\end{align*}
\begin{align*}
f-\frac{b_1 c}{a_1}&=2 \binom{n-d-1}{k-d-1}\binom{n-d}{d}\binom{k}{d}-\frac{\frac{k-d}{n-k}\binom{n-k}{k}\cdot\binom{k-1}{d-1}\binom{n-d-1}{d-1}\binom{n-d-1}{k-d}}{\frac{d}{n-k}\binom{n-k}{k}}\\
&=\binom{n-d-1}{k-d-1}\binom{n-d}{d}\binom{k}{d}\cdot \left(2-\frac{k-d}{d}\cdot \frac{d}{k} \cdot  \frac{d}{n-d}\cdot\frac{n-k}{k-d}\right)\\
&=\binom{n-d-1}{k-d-1}\binom{n-d}{d}\binom{k}{d}\cdot\frac{2kn-dk-dn}{k(n-d)}.
\end{align*}
To verify the second claim, since $a_1, b_1<0$, it suffice to prove for $i=2, \cdots, d$, $b_i/(-b_1) \le a_i/(-a_1)$. We plug in the values of $a_1, b_1, a_i, b_i$. Denote by $(x)_i$ the falling factorial $x(x-1)\cdots (x-i+1)$, then
$$\frac{a_i}{-a_1}=\frac{\frac{k-d}{n-k}\binom{n-k}{k}+(-1)^i\binom{n-k-i}{k-i}}{\frac{d}{n-k}\binom{n-k}{k}}=\frac{k-d+(-1)^i \cdot k \cdot \frac{(k-1)_{i-1}}{(n-k-1)_{i-1}}}{d}.$$
$$\frac{b_i}{-b_1}=(-1)^i \cdot \frac{\binom{k-i}{d-i}\binom{n-d-i}{d-i}\binom{n-d-i}{k-d}}{\binom{k-1}{d-1}\binom{n-d-1}{d-1}\binom{n-d-1}{k-d}}=(-1)^i \cdot \frac{[(d-1)_{i-1}]^2\cdot (n-k-1)_{i-1}}{[(n-d-1)_{i-1}]^2\cdot(k-1)_{i-1}}.$$
Therefore the inequality $b_i/(-b_1) \le a_i/(-a_1)$ we would like to establish can be rewritten as
$$(-1)^i \left[ d \cdot \frac{[(d-1)_{i-1}]^2\cdot (n-k-1)_{i-1}}{[(n-d-1)_{i-1}]^2\cdot(k-1)_{i-1}}- k \cdot \frac{(k-1)_{i-1}}{(n-k-1)_{i-1}} \right] \le k-d.$$
We define
$$S_i(n)=\frac{(k-1)_{i-1}}{(n-k-1)_{i-1}},~~~ T_i(n)=\frac{[(d-1)_{i-1}]^2\cdot (n-k-1)_{i-1}}{[(n-d-1)_{i-1}]^2\cdot(k-1)_{i-1}}.$$ 
Since $k>d$, we have $(d-1)_{i-1} < (k-1)_{i-1}$ and $(n-d-1)_{i-1}>(n-k-1)_{i-1}$. It is not hard to see that $S_i(n)> T_i(n)$ for $i \ge 2$. This implies that when $i$ is even, the left hand side is negative, so this inequality is automatically true in that case. We remark that this simple observation already shows the $2$-degree Erd\H os--Ko--Rado Theorem holds for every $n \ge 2k+1$.

Now suppose $i$ is odd and thus $i \ge 3$. We would need to show that $$k\cdot S_i(n)-d \cdot T_i(n) \le k-d.$$ 
The proof of this last inequality is somewhat technical, so we leave it to the appendix as Lemma \ref{lemma_technical}.
\end{proof}


\section{Concluding Remarks}\label{sec_concluding}
In this paper, we show that the $d$-degree analogue of the Erd\H os--Ko--Rado Theorem holds for $n \ge 2k+2d-3$. On the other hand, we are not aware of any construction strictly beating the $1$-star construction, in the range of $n \ge 2k+1$. In \cite{degree_ekr}, it was shown that for every $n \ge 2k+1$, the $1$-star is the unique example with $\delta_1=\binom{n-2}{k-2}$. However this fails to be the case for larger $d$. The simplest example is to take a Fano plane for $n=7$ and $k=3$. The minimum $2$-degree is equal to $1$, while the $1$-star also has minimum degree $\binom{n-3}{k-3}=1$ in this case. For the next case $(n,k,d)=(9,4,2)$, a more sophisticated construction  by \"Osterg\aa rd \cite{ostergard} gives an intersecting family $\mathcal{F} \subset \binom{[9]}{4}$ that has $\delta_2(\mathcal{F})=6=\binom{n-3}{k-3}$. This example is obtained from gluing three copies of the $2$-$(6,3,2)$ design mentioned in the introduction. More discussions on the $n=2k+1$ case can be found in \cite{ihr-kup}.

Even though unlike the $d=1$ case, the uniqueness no longer holds for $d=2$ and $n=2k+1$, the following conjecture is still plausible and has been now verified for $d \in \{0, 1, 2\}$. 
\begin{conjecture}\label{main_conj}
For every $k>d \ge 0$ and $n \ge 2k+1$, an intersecting family $\mathcal{F} \subset \binom{[n]}{k}$ always has minimum $d$-degree 
$$\delta_d(\mathcal{F}) \le \binom{n-d-1}{k-d-1}.$$
\end{conjecture}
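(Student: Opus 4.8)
The plan is to prove the conjecture by pushing the spectral framework of this paper further, since the remaining cases form a clean finite gap: for $d \in \{0,1,2\}$ the bound is already known (the $d=2$ instance of Theorem~\ref{thm_main} reaches $n \ge 2k+1$), and Theorem~\ref{thm_main} covers $d \ge 3$ down to $n \ge 2k+2d-3$, so what remains is the window $2k+1 \le n \le 2k+2d-4$ for each fixed $d \ge 3$. I would not abandon the eigenvalue method but instead remove the two places where it loses slack in the small-$n$ regime.

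First, I would sharpen Lemma~\ref{lem1}. There, every eigenvalue $-\binom{n-k-i}{k-i}$ with $i>d$ is first replaced by the single worst value $-\binom{n-k-d-1}{k-d-1}$ and then by $-c$ via Lemma~\ref{lem_ineq}; this collapses all of $E_{d+1},\dots,E_k$ into one term and is most wasteful exactly when $n$ is close to $2k$. I would instead keep a few of these eigenspaces separate, say $i=d+1$ and $i=d+2$, obtaining a stronger inequality that carries extra nonnegative terms $\|\vec{h}_{d+1}\|^2$ and $\|\vec{h}_{d+2}\|^2$ with favorable signs. These are precisely the projections that the current argument discards, and the hope is that they supply enough to compensate for the small-$n$ deficit.

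Second, and this is the crux, the contradiction in the proof of Theorem~\ref{thm_main} is obtained by throwing away the terms $(b_i - a_i b_1/a_1)\|\vec{h}_i\|^2$ for $2 \le i \le d$ after checking that their coefficients are nonpositive. For odd $i \ge 3$ this nonpositivity is exactly the technical inequality $k\cdot S_i(n) - d\cdot T_i(n) \le k-d$ of Lemma~\ref{lemma_technical}, which is what forces $n \ge 2k+2d-3$. Rather than discard these terms, I would retain them and seek a single nonnegative linear combination of the inequalities from Lemma~\ref{lem1} and Lemma~\ref{lem2}, the sharpened version above, and the trivial constraints $\|\vec{h}_i\|^2 \ge 0$ and $\sum_i \|\vec{h}_i\|^2 = |\mathcal{F}|$, together with the link $\|\vec{h}_0\|^2 = |\mathcal{F}|^2/\binom{n}{k}$, that eliminates not only $\|\vec{h}_1\|^2$ but also the offending odd projections while preserving the quadratic factorization~\eqref{imp_ineq}. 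This is naturally a small linear/semidefinite feasibility problem in the variables $\|\vec{h}_i\|^2$, and I would first solve it numerically in the extremal boundary cases (such as $d=k-1$, $n=2k+1$) to see whether a certificate exists before attempting a closed-form combination.

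The main obstacle I anticipate is that the method is genuinely tight near $n=2k+1$, where the extremal intersecting families are no longer unique: for $d=2$ the Fano plane and \"Osterg\aa rd's family $\mathcal{F} \subset \binom{[9]}{4}$ already attain $\delta_d(\mathcal{F}) = \binom{n-d-1}{k-d-1}$. Any valid combination must be slack enough to admit these degenerate optimizers yet still strict enough to forbid $\delta_d(\mathcal{F}) > \binom{n-d-1}{k-d-1}$, so a purely linear certificate may not exist and the higher projections $\|\vec{h}_{d+1}\|^2,\dots,\|\vec{h}_k\|^2$ may have to be controlled more carefully, or supplemented by a stability/combinatorial input describing near-extremal families for small $n$. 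I would regard resolving the odd-$i$ inequality of Lemma~\ref{lemma_technical} — or circumventing it by keeping the discarded eigenspaces — as the decisive step.
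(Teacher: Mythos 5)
There is a genuine gap, and it is a fundamental one: what you have written is a research programme, not a proof. The statement you were asked to prove is stated in the paper as an open conjecture --- the paper itself only establishes it for $d\in\{0,1,2\}$ (via the EKR theorem, the Huang--Zhao theorem, and the $d=2$ case of Theorem~\ref{thm_main}), and for $d\ge 3$ only in the range $n\ge 2k+2d-3$. You correctly identify the remaining window $2k+1\le n\le 2k+2d-4$, but you never actually close it: every step of your plan is conditional (``I would sharpen Lemma~\ref{lem1}'', ``I would retain them and seek a single nonnegative linear combination'', ``I would first solve it numerically \dots to see whether a certificate exists''), and you yourself defer the decisive step to a feasibility computation you have not performed. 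No inequality is derived, no coefficient is computed, and no contradiction is reached, so nothing is proved beyond what Theorem~\ref{thm_main} already gives.

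Moreover, the specific strategy you propose is essentially the one the authors themselves sketch in Section~\ref{sec_concluding}: replace the constant $\frac{k-d}{n-k}\binom{n-k}{k}$ in Lemma~\ref{lem1} by another admissible constant so that, after combining with Lemma~\ref{lem2}, the coefficients of both $\|\vec{h}_1\|^2$ and $\|\vec{h}_3\|^2$ vanish while the remaining coefficients stay non-positive. The authors explicitly assess that such refinements could at best lower the threshold to $n\ge 2k+cd$ for a smaller constant $c$, and that \emph{new ideas} are likely necessary for the full range $n\ge 2k+1$. Your own last paragraph concedes the structural obstruction: near $n=2k+1$ there are non-isomorphic extremal families (the Fano plane for $(n,k,d)=(7,3,2)$, \"Osterg\aa rd's family for $(9,4,2)$) attaining $\delta_d(\mathcal{F})=\binom{n-d-1}{k-d-1}$, so any linear certificate in the quantities $\|\vec{h}_i\|^2$ must be simultaneously tight at all of them; in addition, there is no reason the clean factorization~\eqref{imp_ineq}, which is the algebraic heart of the paper's proof, survives once further projections are eliminated. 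Acknowledging an obstruction is not the same as overcoming it. As it stands, the proposal neither reproduces a proof of the conjecture (none exists in the paper) nor supplies one of its own.
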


Here let we mention one possible approach to further lower the threshold. Recall that Lemma \ref{lem1} works even if we replace $\frac{k-d}{n-k}\binom{n-k}{k}$ by any constant at least $\binom{n-k-d-1}{k-d-1}$. It is possible that by selecting a different constant in Lemma \ref{lem1}, after subtracting the two inequalities given by Lemma \ref{lem1} and Lemma \ref{lem2} in the proof of Theorem \ref{thm_main}, one could have the coefficient of not just $\|\vec{h}_1\|^2$, but also that of $\|\vec{h}_3\|^2$ to vanish, while still keeping all the other coefficients of $\|\vec{h}_i\|^2$ non-positive. Such more careful analysis may further lower the threshold needed for $n$, possibly solving for the range $n \ge 2k+cd$ for a smaller $c$. But we believe that  if Conjecture \ref{main_conj} is indeed true, some new ideas are necessary to cover the whole range $n \ge 2k+1$.

One can also ask a more general question: for a given family $\mathcal{G} \subset 2^{[n]}$, when can we guarantee that any intersecting subfamily $\mathcal{F} \subset \mathcal{G}$ has $\delta_d(\mathcal{F}) \le \max_{i \in [n]} \delta_d(\mathcal{G}_i)$? Here $\mathcal{G}_i$ is the family of subsets of $\mathcal{G}$ that contains $i$.  Theorem \ref{thm_main} shows that $\mathcal{G}=\binom{[n]}{k}$ has such property for $d \ge 2$ and $n \ge 2k+2d-3$. On the other hand for $d=0$, Chv\'atal \cite{chvatal_conj} conjectured that as long as $\mathcal{G}$ is {\it hereditary}, meaning that whenever $A \subset B$ and $B \in \mathcal{G}$ then $A \in \mathcal{G}$, then such inequality holds. One might want to guess that this also works for $d \ge 1$. However we can just take $\mathcal{G}=2^{[n]}$ and $\mathcal{F}=\binom{[n]}{\ge (n+1)/2}$ for odd $n$. It is not hard to see that $\delta_d(\mathcal{F}) > \delta_d(\mathcal{G}_i)$ for every $i \in [n]$. 
It would be very interesting to find the right assumptions that may generalize Chv\'atal's conjecture to the $d$-degree case.

\appendix

\section{Technical lemmas}\label{app}
In this appendix, we give the proofs of several technical lemmas used in the paper. Recall that the $(i, j)$-inclusion matrix $W_{i, j}$ is the matrix with rows indexed by all $i$-subsets of $[n]$, columns indexed by all $j$-subsets of $[n]$, and $(W_{i,j})_{S,T}=1$ if $S \subset T$, and $0$ otherwise. And the $(i,j)$-disjointness matrix $\overline{W}_{i, j}$ has $(S, T)$-entry $1$ if $S \cap T=\emptyset$ and $0$ otherwise. It is not hard to see that they have the same row space.
\begin{lemma}\label{lem_eig_W}
Take any column vector $\vec{v} \in E_j \subset \mathbb{R}^{\binom{n}{k}}$, then for $j>i$ we have $W_{i,k}^T W_{i,k} \vec{v}=0$, and for $0 \le j \le i$, 
$$W_{i,k}^T W_{i,k} \vec{v}=\binom{k-j}{k-i}\binom{n-i-j}{k-i}\vec{v}$$
\end{lemma}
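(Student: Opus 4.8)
The plan is to reduce the statement to a pair of eigenvalue computations and then carry these out with an explicit family of eigenvectors. First I would observe that the $(T,T')$-entry of $W_{i,k}^T W_{i,k}$ equals $|\{S:|S|=i,\ S\subseteq T\cap T'\}|=\binom{|T\cap T'|}{i}$, so it depends only on $|T\cap T'|$. Hence $W_{i,k}^T W_{i,k}$ lies in the Bose--Mesner algebra of the Johnson scheme and is simultaneously diagonalized by the decomposition $E_0\oplus\cdots\oplus E_k$; in particular each $E_j$ is an eigenspace. It therefore suffices, for each $j$, to produce a single nonzero $\vec{v}\in E_j$ and read off the eigenvalue from $W_{i,k}^T W_{i,k}\vec{v}$.

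For the eigenvector I would use a product of ``pair-differences''. Fix $2j$ distinct elements $a_1,b_1,\dots,a_j,b_j$ of $[n]$ (possible since $n\ge 2k\ge 2j$) and let $\vec{v}$ be the vector on $k$-sets with $\vec{v}(T)=\prod_{t=1}^{j}\bigl(\mathbf{1}[a_t\in T]-\mathbf{1}[b_t\in T]\bigr)$. Expanding gives $\vec{v}=\sum_{\epsilon\in\{0,1\}^j}(-1)^{|\epsilon|}\vec{1}_{C_\epsilon}$, where $C_\epsilon=\{c_1,\dots,c_j\}$ with $c_t=a_t$ or $b_t$ according to $\epsilon_t$, and $\vec{1}_{C_\epsilon}$ is the indicator of the $j$-star at $C_\epsilon$; thus $\vec{v}\in F_j$. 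For any $(j-1)$-set $R$ some pair $\{a_t,b_t\}$ is disjoint from $R$, and pairing the expansion terms that differ only in the $t$-th coordinate cancels them, so $\vec{v}\perp F_{j-1}$ and hence $\vec{v}\in E_j$. The same cancellation governs $W_{i,k}\vec{v}$: evaluating $(W_{i,k}\vec{v})(S)$ as a signed sum, any $i$-set $S$ that misses some pair entirely or contains some pair completely contributes $0$. In particular when $j>i$ no $i$-set can meet all $j$ pairs, so $W_{i,k}\vec{v}=0$ and the eigenvalue on $E_j$ is $0$, establishing the first assertion.

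When $j\le i$, the surviving $i$-sets are exactly those meeting each pair in precisely one element; for such $S$ a short parity bookkeeping (flipping one coordinate of $\epsilon$ changes $|\epsilon|$ by one) gives $(W_{i,k}\vec{v})(S)=\pm\,\Phi$ with the sign matching the analogous pair-difference eigenvector $\vec{v}^{(i)}$ on $i$-sets, where $\Phi=\sum_{r=0}^{j}(-1)^r\binom{j}{r}\binom{n-i-r}{k-i-r}$. Thus $W_{i,k}\vec{v}=\Phi\,\vec{v}^{(i)}$. A symmetric but simpler count gives $W_{i,k}^T\vec{v}^{(i)}=\binom{k-j}{i-j}\vec{v}$, since for a surviving $k$-set $T$ the only contributing $i$-subsets are those containing $T$'s unique representative from each pair, of which there are $\binom{k-j}{i-j}$. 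Combining, $W_{i,k}^T W_{i,k}\vec{v}=\binom{k-j}{i-j}\,\Phi\,\vec{v}$, so the eigenvalue on $E_j$ equals $\binom{k-j}{i-j}\Phi$.

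It then remains to evaluate $\Phi$, which is the one genuinely computational step. Writing $\binom{n-i-r}{k-i-r}=[x^{k-i-r}](1+x)^{n-i-r}$ and collapsing the resulting geometric-type sum yields $\Phi=[x^{k-i}](1+x)^{n-i-j}=\binom{n-i-j}{k-i}$, so the eigenvalue is $\binom{k-j}{i-j}\binom{n-i-j}{k-i}=\binom{k-j}{k-i}\binom{n-i-j}{k-i}$, matching the claim. I expect the main obstacle to be purely bookkeeping rather than conceptual: correctly tracking the signs in $(W_{i,k}\vec{v})(S)$ so as to factor out the common scalar $\Phi$, and verifying the single binomial identity for $\Phi$; the structural Bose--Mesner reduction, the membership $\vec{v}\in E_j$, and the counts for $W_{i,k}^T\vec{v}^{(i)}$ are all routine. (As an alternative for the vanishing case $j>i$, one can instead note that the row space of $W_{i,k}$ is exactly $F_i$, so $E_j\perp F_i$ forces $\vec{v}\in\ker W_{i,k}$.)
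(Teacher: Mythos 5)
Your proof is correct, but it takes a genuinely different route from the paper's. The paper works entirely inside the inclusion-matrix calculus: it writes $\vec{v}=W_{j,k}^T\vec{u}$, shows $W_{\ell,j}\vec{u}=\vec{0}$ for all $\ell<j$ (passing through the disjointness matrices), and then invokes the product formula $W_{i,k}W_{j,k}^T=\sum_{\ell=0}^j\binom{n-i-j}{n-k-\ell}W_{\ell,i}^TW_{\ell,j}$ quoted from Godsil--Meagher, so only the $\ell=j$ term survives and the eigenvalue $\binom{n-i-j}{n-k-j}\binom{k-j}{k-i}$ falls out; the vanishing case $j>i$ is handled by factoring $W_{i,k}$ as a multiple of $W_{i,j-1}W_{j-1,k}$ and using $W_{j-1,k}\vec{v}=\vec{0}$. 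You instead prove everything from scratch: the Bose--Mesner observation that the entries of $W_{i,k}^TW_{i,k}$ depend only on $|T\cap T'|$ reduces the lemma to computing one explicit eigenvector per $E_j$, and your pair-difference vectors plus sign bookkeeping produce the eigenvalue; your $\Phi$ is indeed $\binom{n-i-j}{k-i}$, as your generating-function step shows (alternatively, for a surviving $S$ one can count directly the $k$-sets $T\supseteq S$ avoiding the $j$ complementary representatives, which gives $\binom{n-i-j}{k-i}$ with no identity needed). What the paper's route buys is brevity and uniformity with the surrounding computations, at the cost of citing an external identity; what yours buys is self-containedness and a transparent picture of both the kernel and the eigenvalue. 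Two points to tighten in a full write-up: you should note $\vec{v}\neq\vec{0}$ (it equals $\pm 1$ on any $k$-set containing exactly one element of each pair, and such sets exist since $n\ge k+j$), and the claim $W_{i,k}^T\vec{v}^{(i)}=\binom{k-j}{i-j}\vec{v}$ also requires the cancellation argument at non-surviving $k$-sets --- although, since the Bose--Mesner step already guarantees that $\vec{v}$ is an eigenvector, it suffices to evaluate the single coordinate at one surviving $T$, so this omission is harmless.
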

\begin{proof}
As discussed in Section \ref{sec_pre}, $E_j$ is the orthogonal complement of $F_{j-1}$ in $F_j$. And $F_j$ is spanned by the indicating vectors of all the $j$-stars. Therefore, $W_{j-1, k}\vec{v}=0$, and $\vec{v}=W_{j,k}^T \vec{u}$ for some $\vec{u} \in \mathbb{R}^{\binom{n}{j}}.$
Therefore for $j>i$,
$$W_{i,k}\vec{v}=\frac{W_{i,j-1}W_{j-1,k}}{\binom{k-i}{j-1-i}}\cdot \vec{v}=\vec{0}.$$

For $\ell<j$, note that $W_{\ell, k}$ and $\overline{W}_{\ell, k}$ have the same row spaces. Since $\vec{v}$ is in the orthogonal complement of $F_\ell$, we know that $W_{\ell, k}\vec{v}=\vec{0}$ and thus we also have $\overline{W}_{\ell, k}\vec{v}=\vec{0}$. Using $\vec{v}=W_{j,k}^T \vec{u}$, we have
$$\vec{0}=\overline{W}_{\ell, k}W_{j,k}^T \vec{u}=\binom{n-\ell-j}{k-j}\overline{W}_{\ell, j}\vec{u}.$$
Again since $W_{\ell, j}$ and $\overline{W}_{\ell, j}$ have the same row spaces, we have $W_{\ell, j}\vec{u}=\vec{0}$ whenever $\ell<j$.

Now for $j \le  i$, we apply the following identity (6.11 from Page 124 of \cite{godsil-meagher}):
\begin{align*}
W_{i,k}W_{j,k}^T = \sum_{\ell=0}^j \binom{n-i-j}{n-k-\ell} W_{\ell, i}^TW_{\ell, j}.
\end{align*}
From the above discussions, $W_{\ell, j}\vec{u}=0$ whenever $\ell<j$, therefore
\begin{align*}
W_{i,k}^T W_{i,k} \vec{v} &= W_{i,k}^T W_{i,k} W_{j,k}^T \vec{u} = \sum_{\ell=0}^j \binom{n-i-j}{n-k-\ell} W_{i,k}^T W_{\ell, i}^TW_{\ell, j}\vec{u}\\
&=\sum_{\ell=0}^j \binom{n-i-j}{n-k-\ell} \binom{k-\ell}{k-i} W_{\ell, k}^TW_{\ell, j}\vec{u}\\
&=\binom{n-i-j}{n-k-j} \binom{k-j}{k-i} W_{j, k}^TW_{j,j}\vec{u}=\binom{n-i-j}{n-k-j} \binom{k-j}{k-i} \vec{v},
\end{align*}
which completes the proof of this lemma.
\end{proof}
\begin{lemma}\label{identity_1}
For integers $j \le d < k \le n/2$,we have
$$\sum_{i=j}^d (-1)^i \binom{k-i}{d-i}^2 \binom{k-j}{k-i}\binom{n-i-j}{k-i}=(-1)^j\binom{k-j}{d-j}\binom{n-d-j}{d-j}\binom{n-d-j}{k-d}.$$
\end{lemma}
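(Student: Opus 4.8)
The plan is to treat this purely as a binomial identity and to collapse the sum by repeated use of the subset-of-a-subset rule $\binom{a}{b}\binom{b}{c}=\binom{a}{c}\binom{a-c}{b-c}$, reducing everything to a single standard alternating sum.

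First I would clear the overall sign and shift the summation index. Writing $i=j+m$ and setting $K=k-j$, $D=d-j$, $N=n-2j$, the factor $(-1)^i=(-1)^j(-1)^m$ pulls out of both sides. Using $n-d-j=N-D$ and $k-d=K-D$, the claimed identity becomes
$$\sum_{m=0}^{D}(-1)^m\binom{K-m}{D-m}^2\binom{K}{m}\binom{N-m}{K-m}=\binom{K}{D}\binom{N-D}{D}\binom{N-D}{K-D}.$$
The virtue of this form is that the repeated factor $\binom{K-m}{D-m}^2$ is exactly what will feed the two extractions below.

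Next I would peel off the two index-free factors appearing on the right. Pairing one copy of $\binom{K-m}{D-m}$ with $\binom{K}{m}$ through $\binom{K}{m}\binom{K-m}{D-m}=\binom{K}{D}\binom{D}{m}$ extracts the factor $\binom{K}{D}$; then rewriting the surviving product as $\binom{K-m}{D-m}\binom{N-m}{K-m}=\binom{N-m}{D-m}\binom{N-D}{K-D}$ extracts $\binom{N-D}{K-D}$. After these two steps the left-hand side reads
$$\binom{K}{D}\binom{N-D}{K-D}\sum_{m=0}^{D}(-1)^m\binom{D}{m}\binom{N-m}{D-m},$$
so the entire lemma reduces to the single alternating identity $\sum_{m=0}^{D}(-1)^m\binom{D}{m}\binom{N-m}{D-m}=\binom{N-D}{D}$.

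Finally I would dispatch this last identity by coefficient extraction: since $\binom{N-m}{D-m}=[x^{D-m}](1+x)^{N-m}$, the sum equals $[x^D](1+x)^N\sum_{m}\binom{D}{m}\left(\tfrac{-x}{1+x}\right)^m=[x^D](1+x)^{N-D}=\binom{N-D}{D}$, and this together with the two extracted factors reproduces the right-hand side exactly. I do not expect a genuine obstacle: every manipulation is a routine hypergeometric step, and the only point requiring care is that all the binomial rearrangements are polynomial identities in $n$ (equivalently in $N$), so they remain valid throughout the relevant range $n\ge 2k$ even when individual binomials fall outside the strict counting regime. The one bit of creative content is recognizing that the squared factor $\binom{K-m}{D-m}^2$ is precisely what permits both extractions and leaves behind the clean alternating Vandermonde sum.
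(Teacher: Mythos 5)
Your proof is correct and takes essentially the same route as the paper: your two factor extractions are just a reorganized version of the paper's step of dividing the left side by the right side, and both arguments reduce the lemma to the identical alternating sum $\sum_{m}(-1)^m\binom{D}{m}\binom{N-m}{D-m}=\binom{N-D}{D}$ (in the paper's variables, $\sum_i(-1)^{i-j}\binom{d-j}{i-j}\binom{n-i-j}{d-i}=\binom{n-d-j}{n-2d}$), which is then settled by a generating-function coefficient comparison in both cases. The only cosmetic difference is that the paper compares coefficients in $(1-x)^{d-j}(1-x)^{-(n-d-j+1)}=(1-x)^{-(n-2d+1)}$ while you extract $[x^D]$ from $(1+x)^{N-D}$ via a substitution.
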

\begin{proof}
We divide the left hand side by the right hand side. After canceling terms, the identity we would like to prove is equivalent to 
$$\sum_{i=j}^d (-1)^{i-j} \frac{\binom{d-j}{i-j}\binom{n-i-j}{d-i}}{\binom{n-d-j}{n-2d}}=1.$$
This identity follows from  comparing the coefficients of $x^{d-j}=x^{i-j}\cdot x^{d-i}$ in the expansions of the following two equal generating functions at $x=0$:
$$(1-x)^{d-j} \cdot (1-x)^{-(n-d-j+1)} = (1-x)^{-(n-2d+1)}.$$
\end{proof}
\begin{lemma}\label{lemma_technical}
Suppose $k > d \ge i \ge 3$ and $n \ge 2k+2d-3$, let $$S_i(n)=\frac{(k-1)_{i-1}}{(n-k-1)_{i-1}},~~~~T_i(n)=\frac{[(d-1)_{i-1}]^2\cdot (n-k-1)_{i-1}}{[(n-d-1)_{i-1}]^2\cdot(k-1)_{i-1}},$$ 
where $(x)_i$ is the falling factorial $(x)_i=x(x-1)\cdots (x-i+1)$.
Then the following inequality holds: $$k\cdot S_i(n)-d \cdot T_i(n) \le k-d.$$ 
\end{lemma}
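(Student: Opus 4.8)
The plan is to first strip away the reciprocal hidden in $T_i(n)$. Writing $U_i(n)=(d-1)_{i-1}/(n-d-1)_{i-1}$, one checks directly that $T_i(n)=U_i(n)^2/S_i(n)$, so the target inequality $k\,S_i(n)-d\,T_i(n)\le k-d$ is equivalent, after multiplying through by $S_i(n)>0$, to the polynomial-type inequality
\[ k\,S_i(n)^2-(k-d)\,S_i(n)-d\,U_i(n)^2\le 0. \]
This is the form I would work with throughout, since it removes the awkward $1/S_i$ and exposes the competition between the two falling-factorial products $S_i(n)=\prod_{j=1}^{i-1}\tfrac{k-j}{n-k-j}$ and $U_i(n)=\prod_{j=1}^{i-1}\tfrac{d-j}{n-d-j}$.

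First I would dispose of the easy regime. If $k\,S_i(n)\le k-d$, then since $d\,U_i(n)^2\ge 0$ we get $k\,S_i(n)-d\,T_i(n)\le k\,S_i(n)\le k-d$ immediately, and the disjointness term plays no role. All the content therefore lies in the complementary range $k\,S_i(n)>k-d$, where the negative term $-d\,U_i(n)^2$ is genuinely needed; this range is nonempty (for instance at $d=k-1$ and large $k$, where $k\,S_3>k-d$ already at the boundary), so the inequality really is not a free consequence of $S_i<1$.

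For the hard range my plan is to reduce to an extremal corner and then verify a single-variable inequality. The three reductions I would establish are that the slack $(k-d)-(k\,S_i-d\,T_i)$ is smallest at the least value $n=2k+2d-3$, at the largest admissible $d=k-1$, and at the smallest index $i=3$ (which is in any case the only odd index strictly needed, since the even-$i$ case is already settled in the main proof). Granting these, at the corner $(i,d,n)=(3,k-1,4k-5)$ one computes $S_3=\tfrac{k-1}{3(3k-7)}$ and $U_3=\tfrac{k-3}{3(3k-5)}$; substituting into the displayed inequality with $k-d=1$ and clearing the positive denominator $3(3k-7)(3k-5)^2$ reduces everything to $4(k-3)(6k^2-15k+7)\ge 0$, which holds for all $k\ge 4$ (the range forced by $d=k-1\ge i=3$) because $k-3\ge 1$ and the discriminant of $6k^2-15k+7$ is small enough that it is positive there.

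I expect the reductions, and in particular the monotonicity in $n$, to be the main obstacle, because $S_i(n)$ and $U_i(n)$ both decrease in $n$, so neither $k\,S_i$ nor $d\,T_i$ moves in a manifestly helpful direction. Differentiating $g(n)=k\,S_i(n)-d\,T_i(n)$ gives $g'(n)=d\,T_i(n)\,(2Q-P)-k\,S_i(n)\,P$, where $P=\sum_{j=1}^{i-1}(n-k-j)^{-1}$ and $Q=\sum_{j=1}^{i-1}(n-d-j)^{-1}>P$; thus $g'(n)\le 0$ is itself equivalent to a lower bound on $g(n)$, and I would close the argument by feeding the target bound $g\le k-d$ back into this expression as a self-improving estimate. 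For the reductions in $d$ and $i$ I would use analogous first-order comparisons, the $i$-step being a telescoping argument comparing the extra factors $s_i=\tfrac{k-i}{n-k-i}$ and $u_i=\tfrac{d-i}{n-d-i}$. Should the monotonicity proofs become unwieldy, a cleaner fallback is to bound the hard range directly: writing $\rho_j=\tfrac{(k-j)(n-d-j)}{(d-j)(n-k-j)}>1$, the inequality becomes $d\,S_i(n)\ge\big(k\,S_i(n)-(k-d)\big)\prod_{j=1}^{i-1}\rho_j^2$, and since $k\,S_i>k-d$ confines $n$ to a bounded window, it would suffice to bound $\prod_{j}\rho_j$ from above on that window.
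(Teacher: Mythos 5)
Your reformulation, your dispatch of the easy regime $k\,S_i\le k-d$, and your corner computation are all correct: at $(i,d,n)=(3,k-1,4k-5)$ one indeed has $S_3=\frac{k-1}{3(3k-7)}$, $U_3=\frac{k-3}{3(3k-5)}$, and the inequality reduces to $4(k-3)(6k^2-15k+7)\ge 0$. But the proof is not there: all of the content of the lemma lies in your three reduction claims, and none of them is proved. Worse, your sketch of the $n$-reduction rests on a reversed inequality: since $d<k$ we have $n-d-j>n-k-j$, so $Q=\sum_{j=1}^{i-1}(n-d-j)^{-1}$ is \emph{smaller} than $P=\sum_{j=1}^{i-1}(n-k-j)^{-1}$, not larger. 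Ironically, with the correct sign this step needs no ``self-improving'' device at all: from $T_i<S_i$ and $d<k$ one gets $d\,T_i<k\,S_i$, and $2Q-P<P$, so $g'(n)=d\,T_i(2Q-P)-k\,S_iP<0$ whether or not $2Q-P$ is positive. (The paper does this step without calculus: the ratio comparison $S_i(n+1)/S_i(n)\le T_i(n+1)/T_i(n)$, which boils down to $d\le k$, lets the bound $g(n)\le k-d$ propagate upward from the base case $n=2k+2d-3$ by induction.)

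The fatal gap is the reduction in $d$. You claim that along the curve $n=2k+2d-3$, with $i=3$, the slack $(k-d)-(kS_3-dT_3)$ is smallest at $d=k-1$, and you verify positivity only there. This cannot be established by any monotonicity or first-order argument of the kind you plan for the other reductions, because the slack is not monotone in $d$: for $k=10$ it is approximately $1.55$ at $d=3$, rises to approximately $2.05$ at $d=4$, and only then decreases to approximately $0.30$ at $d=9$. So even if the minimum over $d$ does sit at $d=k-1$, proving that is a global statement essentially as hard as the lemma itself, and you give no argument for it. The paper avoids this step entirely: after reducing in $n$ and in $i$, it verifies the base case $i=3$, $n=2k+2d-3$ for \emph{all} $d$ at once, by reducing it to the explicit polynomial inequality $(4d-12)k^3+(4d^2-30d+66)k^2-(2d^3+3d^2-53d+114)k+(6d^3-15d^2-15d+60)\ge 0$ and checking it for $d\ge 3$, $k\ge d+1$. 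Note also that the paper's $i$-reduction, which you wave off as ``a telescoping argument,'' requires real work: it writes $kS_i-dT_i=(k-d)\alpha_i+d(\alpha_i-\beta_i)(1+\beta_i/\alpha_i)$ with $\alpha_i=S_i$, $\beta_i=U_i$ and shows each factor decreases in $i$, using the specific value $n=2k+2d-3$ through the inequality $(n-2k)(k-1)(k-2)\ge(n-2d)(d-1)(d-2)$. To repair your proof, fix the sign of $Q$ versus $P$, supply a genuine $i$-reduction, and above all replace the $d=k-1$ specialization with a verification of the $i=3$, $n=2k+2d-3$ case for general $d$.
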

\begin{proof}
    It is not hard to see that $0 < T_i(n)<S_i(n) \le 1$ since $n > 2k$ and $k>d$. Furthermore, since $d<k$, we have
$$\frac{S_i(n+1)}{S_i(n)}=\frac{n-k-i+1}{n-k} \le \left(\frac{n-d-i+1}{n-d}\right)^2\cdot \frac{n-k}{n-k-i+1}=\frac{T_i(n+1)}{T_i(n)}.$$
Thus suppose $k\cdot S_i(n)-d \cdot T_i(n) \le k-d$, we can immediately get
$$k\cdot S_i(n+1)-d\cdot T_i(n+1) \le (k\cdot S_i(n)-d\cdot T_i(n))\cdot \frac{S_{i}(n+1)}{S_i(n)} \le k\cdot S_i(n)-d\cdot T_i(n) \le k-d.$$
Therefore for fixed parameters $k, d$ and $i$, it suffices to verify the inequality $k\cdot S_i(n)-d \cdot T_i(n) \le k-d$ for the base case $n=2k+2d-3$, and then the proof for larger $n$ follows by induction.

Fix $k, d$ and $n=2k+2d-3$, we let 
$$\alpha_i=\frac{(k-1)_{i-1}}{(n-k-1)_{i-1}},~~~~\beta_i=\frac{(d-1)_{i-1}}{(n-d-1)_{i-1}}.$$ 
Then $S_i=\alpha_i$, $T_i=\beta_i^2/\alpha_i$. We first show that $kS_i-dT_i$ decreases in $i$. First note that $\alpha_{i+1}/\alpha_i=(k-i)/(n-k-i)<1$, and similarly $\beta_{i+1}/\beta_i<1$. So $\{\alpha_i\}$ and $\{\beta_i\}$ are both decreasing in $i$. Furthermore,
$$\alpha_i-\alpha_{i+1}=\alpha_i(1-\frac{k-i}{n-k-i})=\frac{n-2k}{n-k-i} \cdot \alpha_i=\frac{(k-1)_{i-1}(n-2k)}{(n-k-1)_i},$$
and similarly $\beta_i-\beta_{i+1}=\frac{(d-1)_{i-1}(n-2d)}{(n-d-1)_i}.$ 
Since $i\ge 3$, it is straightforward to check that $(n-2k)(k-1)(k-2) \ge (n-2d)(d-1)(d-2)$ for $n=2k+2d-3$. Therefore using $k>d$ again, we have
\begin{align*}
\alpha_i-\alpha_{i+1}&=\frac{(k-3)_{i-3}}{(n-k-1)_i} \cdot (k-1)(k-2)(n-2k)\\
& \ge \frac{(d-3)_{i-3}}{(n-d-1)_i} \cdot (d-1)(d-2)(n-2d) =\beta_i-\beta_{i+1}.
\end{align*}
This shows $\alpha_i-\beta_i$ decreases in $i$. On the other hand, 
$$\frac{\beta_{i+1}/\alpha_{i+1}}{\beta_{i}/\alpha_{i}}=\frac{(d-i)/(n-d-i)}{(k-i)/(n-k-i)}<1,$$
so $\beta_i/\alpha_i$ also decreases in $i$. We immediately know that 
$$k\cdot S_i - d \cdot T_i=(k-d)S_i + d(S_i-T_i) = (k-d)\alpha_i + d(\alpha_i-\beta_i)(1+\frac{\beta_i}{\alpha_i})$$
also decreases in $i$. Thus to show $kS_i-dT_i \le k-d$, it suffices to check for $i=3$. For this case, the inequality we would like to prove is equivalent to 
$$d(\alpha_3-\beta_3)(1+\frac{\beta_3}{\alpha_3}) \le (k-d)(1-\alpha_3).$$
Calculations give that
$$1-\alpha_3=1-\frac{(k-1)(k-2)}{(n-k-1)(n-k-2)}=\frac{(n-3)(n-2k)}{(n-k-1)(n-k-2)}.$$
$$\alpha_3-\beta_3=\frac{(k-d)(n-3)((k+d-3)n-2dk+4)}{(n-k-1)(n-k-2)(n-d-1)(n-d-2)}.$$
\begin{align*}
1+\frac{\beta_3}{\alpha_3}&=1+\frac{(d-1)(d-2)/(n-d-1)(n-d-2)}{(k-1)(k-2)/(n-k-1)(n-k-2)}\\
& \le 1+\frac{(d-1)(d-2)}{(k-1)(k-2)} \\
&\le 
 1+\frac{((k-1)-1)((k-1)-2)}{(k-1)(k-2)}\\
& = 1+\frac{k-3}{k-1}=\frac{2k-4}{k-1}.
\end{align*}
Combining these observations, it suffices to prove for $n=2k+2d-3$,
$$\frac{d(2k-4)((k+d-3)n-2dk+4)}{(k-1)(n-d-1)(n-d-2)} \le n-2k$$
Plugging in $n=2k+2d-3,$ it becomes
$$(4d-12)k^3+(4d^2-30d+66)k^2-(2d^3+3d^2-53d+114)k+(6d^3-15d^2-15d+60) \ge 0$$
Since $d \ge 3$, it is easy to verify that $4d-12 \ge 0$, $4d^2-30d+66 \ge 0$. Thus using $k \ge d+1$, the left hand side is at least
\begin{align*}
&k((4d-12)(d+1)^2+(4d^2-30d+66)(d+1)-2d^3-3d^2+53d-114)+(6d^3-15d^2-15d+60)\\
&=(6d^3-33d^2+69d-60)k+(6d^3-15d^2-15d+60)
\end{align*}
It is not hard to check that when $d \ge 3$, both terms in the expression above are again non-negative. This completes the proof.
\end{proof}


\begin{thebibliography}{99}
\bibitem{chvatal_conj}
V. Chv\'atal,
\newblock {Intersecting families of edges in hypergraphs having the hereditary property},
Hypergraph Seminar (Proc. First Working Sem., Ohio State Univ., Columbus, Ohio, 1972; dedicated to Arnold Ross),  61--66. Lecture Notes in Math., Vol. 411, Springer, Berlin, 1974.


\bibitem{DeFr83}
M. Deza and P. Frankl,
Erd\H{o}s--Ko--Rado theorem---22 years later. 
{\em SIAM J. Algebraic Discrete Methods} {\bf 4} (1983), no. 4, 419--431. 


\bibitem{ekr}
P.~Erd\H{o}s, C.~Ko and R.~Rado,
\newblock{Intersection theorems for systems of finite sets},
\newblock{\em Quart. J. Math. Oxford Ser. (2)} , \textbf{12} (1961), 313--318.

\bibitem{frankl_max}
P. Frankl,
Erd\H{o}s--Ko--Rado theorem with conditions on the maximal degree. 
{\em J. Combin. Theory Ser. A} {\bf 46} (1987), no. 2, 252--263. 

\bibitem{FrGr89}
P. Frankl and R. L. Graham,
Old and new proofs of the Erd\H{o}s--Ko--Rado theorem. 
{\em Sichuan Daxue Xuebao} {\bf 26} (1989), 112--122. 

\bibitem{degree_hm}
P.~Frankl, J.~Han, H.~Huang, Y.~Zhao,
\newblock {A degree version of the Hilton-Milner Theorem},
\newblock {\em Journal of Combinatorial Theory, Series A}, {\bf 155} (2018), 493--502.

\bibitem{frankl-tokushige}
P.~Frankl and N.~Tokushige,
\newblock{A note on Huang--Zhao theorem on intersecting families with large minimum degree},
\newblock{\em Discrete Mathematics}, {\bf 340}(5) (2017), 1098--1103.


\bibitem{godsil-meagher}
C.~Godsil and K. Meagher,
\newblock{{\bf Erd\H os--Ko--Rado Theorems: Algebraic Approaches},} 
\newblock{Number 149 in Cambridge Studies in Advanced Mathematics}. Cambridge Univ. Press, December 2016.


\bibitem{degree_ekr}
H.~Huang and Y.~Zhao,
\newblock {Degree versions of the Erd\H os-Ko-Rado Theorem and Erd\H os hypergraph matching conjecture},
\newblock {\em Journal of Combinatorial Theory, Ser. A}, {\bf 150} (2017), 233--247.

\bibitem{ihr-kup}
F.~Ihringer, A.~Kupavskii, 
\newblock{Regular intersecting families}, \newblock{\em Discrete Applied Mathematics} {\bf 270} (1) (2019), 142--152.

\bibitem{kup}
A.~Kupavskii,
\newblock {Degree versions of theorems on intersecting families via stability}, 
\newblock {\em Journal of Combinatorial Theory, Ser. A}, {\bf 168} (2019), 272--287.

\bibitem{lovasz}
L\'aszl\'o Lov\'asz, 
\newblock{On the Shannon capacity of a graph}, 
\newblock {\em IEEE Trans. Inform. Theory}, {\bf 25} (1979), 1--7.

\bibitem{ostergard}
P. \"Osterg\aa rd,
\newblock{There are 270,474,142 non-isomorphic $2$-$(9,4,6)$ designs},
\newblock {\em Journal of Combinatorial Mathematics and Combinatorial Computing}, {\bf 37} (2001), 173--176.

\bibitem{RRsurvey} V. R\"odl and A. Ruci\'nski, Dirac-type questions for hypergraphs -- a survey (or more problems for Endre to solve),
\emph{An Irregular Mind (Szemer\'edi is 70)}, Bolyai Soc. Math. Studies~{\bf 21} (2010), 1--30.

\bibitem{wilson}
R. Wilson,
\newblock{The exact bound in the Erd\H os--Ko--Rado theorem}, \newblock{\em Combinatorica}, {\bf 4} (1984), 247--257. 

\bibitem{zhao_survey}
Y.~Zhao,
\newblock {Recent advances on Dirac-type problems for hypergraphs}.
\newblock {\em Recent Trends in Combinatorics}, 
\newblock {the IMA Volumes in Mathematics and its Applications {\bf 159}}, Springer, New York 2016.

\end{thebibliography}
\end{document}